\date{}
\newcommand{\re}{\mathbb{R}}
\newtheorem{thm}{Theorem}[section]
\newtheorem{rmk}[thm]{Remark}
\newtheorem{cor}[thm]{Corollary}
\def\limsup{\mathop{\overline{\rm lim}}}
\def\L{\displaystyle\limsup}
\title{A sharp stability criterion for single well Duffing and Duffing-like equations}
\author{
Alain Haraux\vspace{1ex}\\ 
{\normalsize Universit\'{e} Pierre et Marie Curie} \\
{\normalsize Laboratoire Jacques-Louis Lions}\\ 
{\normalsize PARIS (France)}\\  
{\normalsize e-mail: \texttt{haraux@ann.jussieu.fr}}}
\begin{document}
\maketitle
\begin{abstract}
	We refine some previous sufficient conditions for exponential stability of the linear ODE  $$ u''+ cu' + (b+a(t))u = 0$$ where $b, c>0$ and $a$ is a bounded nonnegative time dependent coefficient. 
	This allows  to improve some results on uniqueness and asymptotic stability of periodic or almost periodic solutions of the equation$$ u''+ cu' + g(u)=
f(t) $$where $c>0$,  $f \in L^\infty (\re)$  and $g\in C^1(\re)$ satisfies some sign hypotheses. The typical case is $ g(u) = bu + a\vert u\vert^p u $ with $a\ge 0 , b>0.$ Similar properties are valid for evolution equations of the form $$ u''+ cu' + (B+A(t))u = 0$$  where $A(t) $ and $B$ are self-adjoint operators on a real Hilbert space $H$ with $B$ coercive and $A(t)$ bounded in $L(H)$ with a sufficiently small bound of  its norm in $L^{\infty}(\re^+, L(H))$ .
\\

\noindent
{\textbf Mathematics Subject Classification 2010 (MSC2010):}
34D 23, 34 F15, 34 K13, 35L 10

\vspace{1cm} 

\noindent{\textbf Key words:} Second order ODE, bounded solutions, evolution equations, exponential stability.
\end{abstract}

 
\section{Introduction}

Our starting  point is a paper by W.S. Loud concerning the second order ODE 
\begin{equation}\label{Duff} u''+ cu' + g(u)=
f(t)\end{equation} where $c>0$,  $f \in L^\infty ([t_0, +\infty))$  and $g\in C^1(\re)$
satisfies some sign hypotheses. The typical case is 
$$ g(u) = bu + a\vert u\vert^p u
$$ with $a\ge 0 , b>0$ More generally , assuming  $g(0) = 0$  and \begin{equation} \forall s\in {\re},\quad g'(s)
\ge  b
\end{equation}  for some $b>0$,  W.S. Loud \cite{Loud2}, refining some previous results from M. Cartwright and J. Littlewood \cite{C-L}, established that all solutions of
(1) are ultimately bounded and gave rather sharp estimates of their ultimate bound.  The proof of these  estimates relied on a delicate geometrical argument in the phase space. More than 50 years later, in \cite{F-H2},  F. Fitouri and the author, by a purely analytical method consisting in a suitable combination of differential inequalities, obtained the improved estimate on $u$ \begin{equation} \label {CHbound}\displaystyle\limsup_{ \,t \rightarrow +\infty }\vert u(t)\vert \le \max \{ {2\over {c\sqrt b}}, {1\over {b}}\}\
 \displaystyle\limsup_{ \,t \rightarrow +\infty }\vert f(t)\vert \end{equation} 

 In the original  paper  \cite{Loud2}, it was also shown that under a smallness condition on $||f||_{\infty}$, all solutions are asymptotic to each other at infinity and in particular if $f $ is periodic, there is only one periodic solution. On the other hand it has been  known  long ago that for large $f$, there may exist several  periodic solutions, cf. \cite{Lev, Loud3}. In \cite{F-H2}, Loud's result on uniqueness and asymptotic stability was also refined  by working on stability properties of the dissipative Hill's equation  \begin{equation} \label {Hilldiss} u''+ cu' + (b+a(t))u = 0\end{equation} where $b, c>0$ and $a$ is a bounded time dependent coefficient with $a\ge 0$. \\
 
 The stability question for Duffing equations as well as nonlinear infinite dimensional systems having a similar structure has an important impact on the theoretical study of engineering problems such as stability of bridges, cranes, structural stability of mechanical devices involving elastic beams and plates. There is a vast litterature on these questions covering many important issues of applied research, cf. e.g. \cite{FilG ?, FG, GaGa, FGa, GGH} and the references therein. \\
 
 The main point of the present paper is an improvement of the sufficient conditions on $a$ insuring the asymptotic (actually exponential) stability of the null solution to  equation \eqref{Hilldiss}.
 The plan of the paper is the following: in Section 2 we state and prove an improved condition of stability and we discuss the sharpness of that condition. In Section 3 we derive the consequences for Duffing's equation \eqref{Duff} with some comments on optimality. Sections 4 and 5 are devoted to a class of  wave equations to which one of our main results can be extended.

\setcounter{equation}{0}
\section{A simple exponential stability result}\label{sec:abstr} Equation \eqref{Hilldiss} has been the object of many studies, either without dissipation (case $c=0$) or with dissipation (case $c>0$). The case without dissipation is difficult even when the coefficient $a(t)$ is a simple trigonometric function, since complicated resonance phenomena can occur. The dissipative case can be reduced to it by a simple transformation, but then the global information on solutions is difficult to transfer back to the original problem. Here we treat directly the dissipative case by the method of Liapunov functions. 
\subsection{Main result}\label{main}

\begin{thm}\label {mainth} Let $J \subset \re$ be an open interval and assume that $a: = a(t) \in L^{\infty}(J)$ with  \begin{equation} \label {C} 0\le a(t) \le C,\quad a.e.  in J. \end{equation} Assume \begin{equation}\label{C2} C< c\max\{c, 2\sqrt b \}. \end{equation} Then there are  $\delta>0$ and $M>0$ such that any solution $u\in W^{2,\infty} $ of \eqref{Hilldiss} in $J$ satisfies  \begin{equation}u^2(t) + u'^2(t) \le M [u^2(s) + u'^2(s)]\exp(-\delta (t-s))
\end{equation} whenever $s, t$ are in $J$ with $s\le t.$
\end{thm}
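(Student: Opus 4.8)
The plan is to produce a quadratic Lyapunov functional $V(t)$, equivalent to $u^{2}(t)+u'^{2}(t)$, which decays at an exponential rate along every solution of \eqref{Hilldiss}; Gronwall's inequality together with this equivalence then give the claimed estimate, with $\delta$ the decay rate and $M$ the ratio of the extreme eigenvalues of $V$. First I would pass to the shifted velocity $w:=u'+\tfrac{c}{2}u$, which turns \eqref{Hilldiss} into the first order system
\begin{equation}
u'=-\tfrac{c}{2}u+w,\qquad w'=-\tfrac{c}{2}w-\beta(t)u,\qquad \beta(t):=b-\tfrac{c^{2}}{4}+a(t),
\end{equation}
and, for a parameter $\mu>0$ to be chosen, set $V:=\tfrac12\mu u^{2}+\tfrac12 w^{2}$. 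Since $\mu>0$ and $(u,u')\mapsto(u,w)$ is a linear isomorphism, $V$ is a positive definite quadratic form in $(u,u')$, hence equivalent to $u^{2}+u'^{2}$; and a solution in $W^{2,\infty}$ has $u$ and $w$ locally absolutely continuous, so $V$ is locally absolutely continuous and an a.e. differential inequality for it can be integrated. It therefore suffices to find $\delta>0$ with $V'\le-\delta V$ a.e. in $J$.

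Differentiating $V$ along the system and using $\mu u^{2}+w^{2}=2V$ gives
\begin{equation}
V'=-\tfrac{c}{2}\bigl(\mu u^{2}+w^{2}\bigr)+\bigl(\mu-\beta(t)\bigr)uw=-cV+\bigl(\mu-\beta(t)\bigr)uw .
\end{equation}
From $2\sqrt{\mu}\,|uw|\le\mu u^{2}+w^{2}=2V$ one gets $|uw|\le V/\sqrt{\mu}$, hence
\begin{equation}
V'\le-\Bigl(c-\frac{|\mu-\beta(t)|}{\sqrt{\mu}}\Bigr)V\qquad\text{a.e. in }J .
\end{equation}
Since $\beta\in L^{\infty}(J)$ with $\beta(t)\in[\,b-\tfrac{c^{2}}{4},\,b-\tfrac{c^{2}}{4}+C\,]$ a.e. by \eqref{C}, a uniform $\delta>0$ with $V'\le-\delta V$ a.e. is available as soon as $(\mu-\beta)^{2}<c^{2}\mu$, i.e. $\mu-c\sqrt{\mu}<\beta<\mu+c\sqrt{\mu}$, holds for every $\beta$ in that interval; this amounts to $\mu-c\sqrt{\mu}<b-\tfrac{c^{2}}{4}$ together with $b-\tfrac{c^{2}}{4}+C<\mu+c\sqrt{\mu}$, i.e. to
\begin{equation}
\bigl(\sqrt{\mu}-\tfrac{c}{2}\bigr)^{2}<b\qquad\text{and}\qquad\bigl(\sqrt{\mu}+\tfrac{c}{2}\bigr)^{2}>b+C .
\end{equation}

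It remains to check that such a $\mu>0$ exists under hypothesis \eqref{C2}. The first inequality says $\sqrt{\mu}\in(\tfrac{c}{2}-\sqrt b,\ \tfrac{c}{2}+\sqrt b)$ and the second says $\sqrt{\mu}>\sqrt{b+C}-\tfrac{c}{2}$, so an admissible $\sqrt{\mu}$ can be chosen exactly when $\sqrt{b+C}-\tfrac{c}{2}<\tfrac{c}{2}+\sqrt b$, that is when $C<c^{2}+2c\sqrt b=c(c+2\sqrt b)$; and \eqref{C2} forces this inequality, since $c\max\{c,2\sqrt b\}\le c(c+2\sqrt b)$. Fixing any $\mu$ in the resulting nonempty open range gives $\delta>0$ with $V(t)\le V(s)e^{-\delta(t-s)}$ for $s\le t$ in $J$, whence $u^{2}(t)+u'^{2}(t)\le(c_{2}/c_{1})[u^{2}(s)+u'^{2}(s)]e^{-\delta(t-s)}$, where $c_{1}\le c_{2}$ are the extreme eigenvalues of $V$ regarded as a form in $(u,u')$; this is the assertion with $M=c_{2}/c_{1}$. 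The only nonroutine step is the choice of the functional: the shift $w=u'+\tfrac{c}{2}u$ makes the dissipation act with equal strength on the two coordinates, so that $V'+cV$ collapses to the single indefinite cross term $(\mu-\beta)uw$, which is dominated by $cV$ as soon as $\beta(t)$ stays close enough to $\mu$; hypothesis \eqref{C2} is precisely what leaves room for such a $\mu$.
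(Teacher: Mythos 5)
Your proof is correct, and while it lives in the same circle of ideas as the paper's (a quadratic Lyapunov function built around the cross term $\tfrac{c}{2}uu'$), it is organized in a genuinely different and more unified way. The paper argues in two separate regimes: the functional $F=\tfrac12(u'^2+bu^2)+\tfrac{c}{2}uu'+\tfrac{c^2}{4}u^2$ gives the result for $C<c^2$, and for $C<2c\sqrt b$ the coefficient is first centred, $\alpha=a-\tfrac{C}{2}$, and a second functional (the analogue of $F$ with $b$ replaced by $b+\tfrac{C}{2}$) is used together with a discriminant condition yielding $C^2<4c^2b$. You instead pass to the shifted velocity $w=u'+\tfrac{c}{2}u$, which makes the damping act symmetrically, and analyze the whole one-parameter family $V_\mu=\tfrac{\mu}{2}u^2+\tfrac12 w^2$ (rewritten in the variables $(u,u')$, this family contains the paper's two functionals as particular values of $\mu$, namely $\mu=b+\tfrac{c^2}{4}$ and $\mu=b+\tfrac{C}{2}+\tfrac{c^2}{4}$); everything then reduces to the single condition $|\mu-\beta(t)|<c\sqrt\mu$, and optimizing over $\mu$ handles both regimes at once. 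What the paper's route buys is directness: two explicit functionals, no change of variables, and the two thresholds $c^2$ and $2c\sqrt b$ of the statement appear transparently. What your route buys is more: the admissible $\mu$ exists exactly when $C<c^2+2c\sqrt b=c(c+2\sqrt b)$, which is strictly weaker than hypothesis \eqref{C2}, so your argument in fact proves a slightly stronger theorem (this does not contradict the near-optimality example of Theorem~\ref{Countex}, which operates in the regime of small $c$, where $c^2$ is negligible compared with $c\sqrt b$). The two points that need explicit mention — uniformity of the margin $\delta$ over the essential range of $\beta$, and local absolute continuity of $V$ so that the a.e. differential inequality can be integrated — are both addressed in your write-up, so I see no gap.
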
 \begin{proof} We introduce the two quadratic forms of the solution \begin{equation} E: = E(t) = \frac{1}{2}(u'^2(t) +b u^2(t)) \end{equation} and 
\begin{equation} F := E + \frac{c}{2}uu' +  \frac{c^2}{4}u^2\end{equation} The quadratic form $F$ is equivalent to the basic form $u'^2+u^2$ since we also have 
$$ F =  \frac{1}{4}u'^2 + \frac{b}{2}u^2 + \frac{1}{4} (u'+ cu)^2 . $$ Now an immediate calculation shows that along any solution curve $u= u(t)$ we have 
$$ F'(t) = -auu'- \frac{c}{2}u'^2 - \frac{cb}{2}u^2- \frac{ca}{2}u^2 $$   We observe that $$ -auu'-  \frac{ca}{2}u^2  =  - \frac{a}{2c} ( u'+ c u ) ^2 + \frac{a}{2c}u'^2$$ so that 
$$ F'(t) \le - (\frac{c}{2}- \frac{a}{2c})u'^2 - \frac{cb}{2}u^2: = - \Phi (u, u' ) $$ 
Now since $0\le a\le C$, the quadratic form $ \Phi (u, v)$  is uniformly (with respect to t) definite positive as soon as $$ \frac{c}{2}>\frac{C}{2c} $$  proving the claim when $C<c^2 .$\\

To conclude when $ C< 2c\sqrt b$ , we rewrite the equation in the form

 \begin{equation}  u''+ cu' + (b+\frac{C}{2})u  + \alpha (t) u= 0\end{equation} where $\displaystyle \alpha(t) = a(t) - \frac{C}{2}$ satisfies $\displaystyle |\alpha(t)| \le \frac{C}{2}$, so that the variable coefficient of $u$ is centred while the fixed coefficient increases by $\displaystyle\frac{C}{2}$.  We consider now the alternative energy function \begin{equation} G := F+ \frac{C}{2} u^2=  \frac{1}{2}(u'^2 +(b+\frac{C}{2}) u^2+ \frac{c}{2}uu' +  \frac{c^2}{4}u^2\end{equation} and we obtain $$ G'(t) = -\alpha uu'- \frac{c}{2}u'^2 - \frac{cb_1}{2}u^2- \frac{c\alpha}{2}u^2 $$ with $\displaystyle b_1= b+\frac{C}{2}.$ Since $\displaystyle\alpha \ge -\frac{C}{2}$ we infer 
 $$ G'(t) \le  -\alpha uu'- \frac{c}{2}u'^2 - \frac{cb}{2}u^2  = -\Psi (u, u') $$ But since $ \displaystyle |\alpha(t)| \le \frac{C}{2}$, a sufficient condition for $ \Psi $ to be uniformly (with respect to t) definite positive is now 
 $$ \left(\frac{C}{2}\right)^2 < 4 \frac{c}{2}.\frac{cb}{2}$$ which reduces to $ C^2< 4c^2b $ as claimed. \end{proof}

\subsection{Comparison with the stability conditions from \cite{F-H2}}\label{sec:Comp}

 In \cite{F-H2}, the authors obtained the exponential stability of the null solution under the stronger assumption $C< c\sqrt{b}$, and a strong stability result for bounded solutions under the assumption $$C\le {c^2\over4}+ c\sqrt{b+{c^2\over 16}},$$ leaving open the question of the rate of convergence to $0$ in the second case. Actually it is easily checked that $$ {c^2\over4}+ c\sqrt{b+{c^2\over 16}} < c\max\{c, 2\sqrt b \}$$ for all positive constants $b, c$, for instance we have the following chain of inequalities 
$$ \sqrt{b+{c^2\over 16}}  < \sqrt{b}+{c\over 4} \le  \max\{{3c\over 4}, 2\sqrt b- {c\over 4} \} = \max\{c, 2\sqrt b \}- {c\over 4}.$$ 
Therefore Theorem \ref{mainth} improves both results from \cite{F-H2} and solves the question on the decay rate. The stability result for equation \eqref{Hilldiss} under the  assumption $C< c\sqrt{b}$ was used in \cite{FilG ?}, and it is reasonable to expect that theorem \ref{mainth} will allow some progress in the study of stability of bridges.  It has been known for a long time that condition \eqref{C} with $C$ arbitrary is not sufficient for the conclusion of the Theorem, because, for instance, Duffing's equation can have multiple periodic solutions. The question of the maximal value of $C$ preserving the result of Theorem \ref{mainth} seems to be quite delicate and we give a partial answer below. 
 
 \subsection{A step towards optimality.}\label{sec:Opt}
 
 The next result shows that in the statement of Theorem \ref{mainth}, the condition \eqref{C2} is sharp up to a factor at most $\frac{\pi}{2}$.
 
 \begin{thm}\label {Countex} For any $\varepsilon>0$, there exists $b>1$ and $c< 2\sqrt b$ such that equation \eqref{Hilldiss} has an unbounded solution on $J = \re^+$ whereas \eqref {C}  holds true with $ C < (\pi + \varepsilon)c \sqrt b.$ \end{thm}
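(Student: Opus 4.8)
The plan is to exhibit, for suitable $b,c,C$, a two‑valued periodic coefficient $a$ taking only the values $0$ and $C$ and switching at times tuned to the turning points of the oscillation, and to read the instability off an explicit monodromy (period) matrix; this amounts to a damped \emph{Meissner} equation. First I would remove the damping by the substitution $u(t)=e^{-ct/2}v(t)$, which turns \eqref{Hilldiss} into the conservative equation
\begin{equation*}
 v''+q(t)\,v=0,\qquad q(t)=b-\tfrac{c^{2}}{4}+a(t),
\end{equation*}
so that $u$ is unbounded on $\re^{+}$ as soon as $v$ grows at an exponential rate strictly larger than $c/2$; it therefore suffices to produce such a $v$.

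Assuming $c<2\sqrt b$, I would set $\omega_{1}:=\sqrt{b-c^{2}/4}>0$ and $\omega_{2}:=\sqrt{b+C-c^{2}/4}>\omega_{1}$, and take $a$ to be the $T$‑periodic function equal to $0$ on $[0,\tfrac{\pi}{2\omega_{1}})$ and to $C$ on $[\tfrac{\pi}{2\omega_{1}},\tfrac{\pi}{2\omega_{1}}+\tfrac{\pi}{2\omega_{2}})$, with $T:=\tfrac{\pi}{2}(\tfrac1{\omega_{1}}+\tfrac1{\omega_{2}})$; then $a\ge 0$ and \eqref{C} holds with this constant $C$. On an interval of length $\tau$ on which $q\equiv\omega_{i}^{2}$ the transfer matrix of the conservative equation is
\begin{equation*}
 R_{i}(\tau)=\begin{pmatrix}\cos\omega_{i}\tau & \omega_{i}^{-1}\sin\omega_{i}\tau\\ -\omega_{i}\sin\omega_{i}\tau & \cos\omega_{i}\tau\end{pmatrix},
\end{equation*}
which for $\omega_{i}\tau=\pi/2$ reduces to $\bigl(\begin{smallmatrix}0&\omega_{i}^{-1}\\-\omega_{i}&0\end{smallmatrix}\bigr)$, so that the monodromy matrix over one period is the diagonal matrix $\mathrm{diag}(-\omega_{1}/\omega_{2},-\omega_{2}/\omega_{1})$. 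Consequently the solution with $v(0)=0$, $v'(0)=1$ satisfies $\bigl|v(nT+\tfrac{\pi}{2\omega_{1}})\bigr|=\omega_{1}^{-1}(\omega_{2}/\omega_{1})^{n}$, i.e.\ $v$ grows at the exponential rate $\rho:=T^{-1}\log(\omega_{2}/\omega_{1})>0$, and then $u=e^{-ct/2}v$ is an unbounded solution of \eqref{Hilldiss} as soon as $2\rho>c$.

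It then remains to choose the constants so that $2\rho>c$ while also $b>1$, $c<2\sqrt b$ and $C<(\pi+\varepsilon)c\sqrt b$. I would look for the example in the one‑parameter family $b=1+c^{2}/4$ (i.e.\ $\omega_{1}=1$): then $b>1$ and $c<\sqrt{4+c^{2}}=2\sqrt b$ hold automatically for every $c>0$, and $\omega_{2}=\sqrt{1+C}$. The instability condition $2\rho>c$ reads
\begin{equation*}
 c<h(C):=\frac{2\sqrt{1+C}\,\log(1+C)}{\pi\,(1+\sqrt{1+C})},
\end{equation*}
whereas $C<(\pi+\varepsilon)c\sqrt b$ is implied (since $\sqrt b>1$) by $c>C/(\pi+\varepsilon)$. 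These two requirements on $c$ are compatible precisely when $C/(\pi+\varepsilon)<h(C)$; and since $h(C)\sim C/\pi$ as $C\to 0^{+}$, one has $C/h(C)\to\pi<\pi+\varepsilon$, so for every sufficiently small $C>0$ the interval $(C/(\pi+\varepsilon),\,h(C))$ is non‑empty. For such a $C$, any $c$ in that interval together with $b=1+c^{2}/4$ then provides the required example.

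I do not expect a real obstacle once the right test coefficient has been chosen: the decisive idea is to let $a$ alternate between its two extreme values and to switch exactly at the turning points $\omega_{i}\tau=\pi/2$, which forces the monodromy matrix to be diagonal with multiplier $\omega_{2}/\omega_{1}$ and is what produces the factor $\pi$ (a cruder timing would give a larger, hence weaker, constant). The only point requiring care is the elementary asymptotics $h(C)\sim C/\pi$ as $C\to 0^{+}$, which localizes the loss to the factor $\pi/2$ relative to the stability bound $C<2c\sqrt b$ of Theorem \ref{mainth}. Pinning down the exact sharp constant, which lies in $[2,\pi]\cdot c\sqrt b$, I would expect to remain open.
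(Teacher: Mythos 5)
Your proposal is correct and follows essentially the same route as the paper: the substitution $u=e^{-ct/2}v$, a bang--bang coefficient switching exactly at quarter periods (with the normalization $b=1+c^{2}/4$, i.e.\ $\omega_{1}=1$, your threshold $h(C)$ is precisely the paper's $c_{0}=\frac{4\omega\ln\omega}{\pi(1+\omega)}$ with $\omega=\sqrt{1+C}$), and the limit $C\to 0^{+}$ producing the factor $\pi$. The only cosmetic difference is that you encode the growth via the monodromy matrix rather than the paper's explicit piecewise trigonometric solution.
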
 \begin{proof} For any number $\omega>1$ we set 

\begin{equation}
\alpha(t)=\left\{
\begin{array}{cc}
\omega^2& \mbox{if }t\in (0, \frac{\pi}{2\omega}), \\[0.5ex]
1& \mbox{if }t\in (\frac{\pi}{2\omega}, \frac{\pi}{2\omega}+ \frac{\pi}{2}),
\end{array}
\right. \end{equation} and 

\begin{equation}
v(t) = \left\{
\begin{array}{cc}
\cos \omega t  & \mbox{if }t\in [0, \frac{\pi}{2\omega}], \\[0.5ex]
-\omega \sin( t - \frac{\pi}{2\omega}) & \mbox{if }t\in [\frac{\pi}{2\omega}, \frac{\pi}{2\omega}+ \frac{\pi}{2}]\end{array}
\right.\end{equation} It is not difficult to check that $v\in C^1 ([0, \frac{\pi}{2\omega}+ \frac{\pi}{2}]$ with \begin{equation}
v'(t) = \left\{
\begin{array}{cc}
-\omega \sin \omega t  & \mbox{if }t\in [0, \frac{\pi}{2\omega}], \\[0.5ex]
-\omega cos( t - \frac{\pi}{2\omega}) & \mbox{if }t\in [\frac{\pi}{2\omega}, \frac{\pi}{2\omega}+ \frac{\pi}{2}]\end{array}
\right.\end{equation} Moreover $v'$ is Lipschiz continuous and $v$ is a solution of \begin{equation} \label{Hill} v''+ \alpha(t) v= 0 \end{equation} Then we can extend $v$ on the full halfline by setting, for each $k\in \mathbb{N}$, 

\begin{equation}
v(t + k (\frac{\pi}{2\omega}+ \frac{\pi}{2}) = \left\{
\begin{array}{cc}
(-\omega) ^k \cos \omega t  & \mbox{if }t\in [0, \frac{\pi}{2\omega}], \\[0.5ex]
(-\omega) ^{k+1} \sin( t - \frac{\pi}{2\omega}) & \mbox{if }t\in [\frac{\pi}{2\omega}, \frac{\pi}{2\omega}+ \frac{\pi}{2}]\end{array} 
\right.\end{equation}and it turns out that $v$ is a solution of \eqref {Hill} on $\re^+$ with $\alpha$  extended on $\re$ by $ (\frac{\pi}{2\omega}+ \frac{\pi}{2})-$ periodicity. Now $v$ is clearly unbounded and if we introduce 
$$ u(t) = \exp(-\frac{c}{2}t) v(t), $$ u will still be unbounded for $c$ small enough. In addition $u$ is a solution on $\re^+$ of the equation 
\begin{equation} \label{Hilldis} u''+ (\alpha(t) + \frac{c^2}{4}) u + cu' = 0 \end{equation} We are now in the situation of Theorem \ref{mainth} with 
$$ b = 1+ \frac{c^2}{4}, \quad a(t) = \alpha(t) -1,\quad C = \omega^2 -1 $$  In order for $u$ to be unbounded, the condition is 
$$ \omega > \exp( \frac{c}{2} (\frac{\pi}{2\omega}+ \frac{\pi}{2}) $$ which reduces to $$ c< \frac{4\omega \ln \omega} {\pi (1+\omega) }:= c_0$$ Of special interest is the case $\omega : = 1+h$ with $h$ tending to zero. In that situation, $c_0$ is equivalent to $\frac{2}{\pi} h$ and $C = \omega^2 -1 $ is equivalent to $2h$, so that 
$$ \frac{C}{c_0\sqrt b} \sim \frac{2h}{\frac{2h}{\pi}} = \pi $$ \end{proof}
\begin{rmk} For the time being we do not have a similar example for large dissipations (i.e. when $c> 2\sqrt b$). On the other hand, since when $a$ is constant there is no condition on its size for exponential stability, it is rather peculiar that we are able to approach the sufficient condition so closely with a ``bang-bang" periodic coefficient $a$ taking only two values $0$ and $C$, moreover with C as small as we wish. \end{rmk}
\begin{rmk} The situation described in our example is a special case of the general phenomenon  called ``parametric resonance" (cf. e.g. \cite{Landau}). But here the calculations are exceedingly simple as well as the geometric ideas behind the formulas. Generally the examples given in the litterature are not computable by elementary functions, here we ``sacrificed" analyticity in order to obtain computable solutions. It is clear that analytic examples with the same properties must also exist. \end{rmk}

\medskip  

\setcounter{equation}{0}\section{Application to dissipative Duffing's equations}\label{sec:Duffing}

Let $g\in C^1(\re)$ be such that  $g(0) = 0$ and for some $b>0$ $$ \forall s\in {\re},\quad g'(s)
\ge  b $$
The following result, improving Theorem 3.1 from \cite{F-H2}, is an immediate consequence of theorem \ref{mainth}  applied to the difference of two solutions :

\medskip \begin{thm} Let $u, v$ two solutions of \eqref{Duff} on $J = [t_0, +\infty)$ and let us set

$$ M = \max \{\L_{ \,t \rightarrow +\infty }\vert u(t)\vert,
\L_{ \,t \rightarrow +\infty }\vert v(t)\vert\} $$
$$ A  = \sup _{s \in [-M, M]} \{ g'(s) -b  \}$$
Then assuming \begin{equation}A< c\max\{c, 2\sqrt b \}. \end{equation} there are $\delta>0$ and $K>0$ such that 
 \begin{equation} \forall t\in J,\quad \vert u(t)-v(t)\vert+\vert u'(t)-v'(t)\vert \le K \exp(-\delta t )
 \end{equation} \end{thm}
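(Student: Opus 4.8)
The plan is to reduce the statement to Theorem~\ref{mainth} applied to the difference $w := u - v$. Subtracting the two copies of \eqref{Duff} gives $w'' + cw' + \big(g(u) - g(v)\big) = 0$, and writing $g(u(t)) - g(v(t)) = \big(\int_0^1 g'\big(v(t) + \theta(u(t)-v(t))\big)\,d\theta\big)\,w(t)$ (the integral form of the mean value theorem, chosen so as to keep things continuous in $t$), I would set
\[ a(t) := \int_0^1 g'\big(v(t) + \theta(u(t)-v(t))\big)\,d\theta - b . \]
The hypothesis $g' \ge b$ gives $a(t) \ge 0$, so $w$ solves the dissipative Hill equation $w'' + cw' + (b + a(t))w = 0$, i.e. \eqref{Hilldiss}; moreover $w$, being a difference of solutions of \eqref{Duff} with $f \in L^\infty$, lies in $W^{2,\infty}$ on every bounded subinterval of $J$. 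Note also $M < \infty$ by \eqref{CHbound}.

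The only delicate point is that $a$ is controlled by $A$ merely \emph{asymptotically}, because the arguments $v(t) + \theta(u(t)-v(t))$ are convex combinations of $u(t)$ and $v(t)$, which are only ultimately bounded by $M$ in the $\L$ sense. I would therefore fix $\ep > 0$ so small that $A + \ep < c\max\{c, 2\sqrt b\}$ — possible since the hypothesis is a \emph{strict} inequality — then invoke the continuity of $g'$ (uniform continuity on compacts) to find $\eta > 0$ with $\sup_{|s| \le M + \eta}(g'(s) - b) \le A + \ep$, and finally use the definition of $M$ to find $T \ge t_0$ such that $|u(t)| \le M + \eta$ and $|v(t)| \le M + \eta$ for all $t \ge T$. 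Then every convex combination of $u(t), v(t)$ with $t \ge T$ lies in $[-(M+\eta), M+\eta]$, so
\[ 0 \le a(t) \le C := A + \ep < c\max\{c, 2\sqrt b\}, \qquad t \ge T . \]

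Next I would apply Theorem~\ref{mainth} on each bounded open interval $(T, T')$ with $T' > T$: on it $a \in L^{\infty}$ with $0 \le a \le C$ and $w \in W^{2,\infty}$, so the theorem yields constants $\delta > 0$ and $M_1 > 0$ — depending only on $b, c, C$, hence \emph{not} on $T'$ — with $w^2(t) + w'^2(t) \le M_1\big(w^2(s) + w'^2(s)\big)\exp(-\delta(t-s))$ whenever $T < s \le t < T'$. Letting $T' \to +\infty$, this holds for all $T < s \le t$. Freezing some $s_0 > T$ gives $w^2(t) + w'^2(t) \le K_0 \exp(-\delta t)$ for $t \ge s_0$, where $K_0 := M_1\big(w^2(s_0) + w'^2(s_0)\big)\exp(\delta s_0)$, hence $|w(t)| + |w'(t)| \le \sqrt{2 K_0}\,\exp(-\tfrac{\delta}{2} t)$ on $[s_0, +\infty)$; on the compact interval $[t_0, s_0]$ the continuous function $|w| + |w'|$ is bounded by some $K_1$, so $|w(t)| + |w'(t)| \le K_1 \exp\!\big(\tfrac{\delta}{2}(s_0 - t)\big)$ there. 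Relabelling $\delta/2$ as $\delta$ and setting $K := \max\{\sqrt{2 K_0},\, K_1 \exp(\delta s_0)\}$ gives $|u(t) - v(t)| + |u'(t) - v'(t)| \le K \exp(-\delta t)$ for all $t \in J$.

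The main obstacle, as indicated, is the localisation in the second paragraph: since the coefficient $a(t)$ need not satisfy $a \le A$ on all of $J$ (only eventually), one is forced to enlarge the bound slightly to $C = A + \ep$, which is precisely why the \emph{strict} inequality in the hypothesis is needed; the fact that the decay constants furnished by Theorem~\ref{mainth} are independent of the interval is what then permits pushing $T'$ to infinity and recovering an estimate on the whole half-line. Everything else is routine.
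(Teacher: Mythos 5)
Your proof is correct and follows exactly the route the paper intends: the paper states the result as an ``immediate consequence of Theorem \ref{mainth} applied to the difference of two solutions'' and gives no further detail. Your write-up simply supplies the routine details (integral mean value theorem for the coefficient $a(t)$, the $\ep$-enlargement needed because $M$ is only a $\limsup$ bound, and the patching on the initial compact interval), all of which are sound.
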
 
 \medskip  \begin{cor} In the typical case  $$ g(u) = bu + a\vert u\vert^p u
$$
the convergence result is obtained as soon as
$$ \L_{ \,t \rightarrow +\infty }\vert f(t)\vert<\min \{{{bc^{2\over p}} \over {(a(p+1))^{1\over p}}}, {c^{{p+1}\over p}b^{{p+1}\over 2p}\over {2^{{p-1}\over p}(a(p+1))^{1\over p}}}\}$$\end{cor}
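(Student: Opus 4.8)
The plan is to specialise the Theorem that immediately precedes the Corollary to the concrete nonlinearity $g(u)=bu+a|u|^pu$, and to estimate the quantity $A$ occurring there by means of the ultimate bound \eqref{CHbound}. First I would compute $g'(s)=b+a(p+1)|s|^p$, so that $g'(s)-b=a(p+1)|s|^p$ is a nondecreasing function of $|s|$; hence, with $M=\max\{\limsup_{t\to+\infty}|u(t)|,\ \limsup_{t\to+\infty}|v(t)|\}$ as in the Theorem,
$$ A=\sup_{s\in[-M,M]}\{g'(s)-b\}=a(p+1)M^p . $$

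Next I would use the a priori estimate \eqref{CHbound}, which holds for every solution of \eqref{Duff} under the standing assumption $g'\ge b$ (satisfied here since $a\ge 0$): writing $L:=\limsup_{t\to+\infty}|f(t)|$, it gives $\limsup|u|\le\max\{\tfrac{2}{c\sqrt b},\tfrac1b\}L$ and likewise for $v$, hence $M\le\max\{\tfrac{2}{c\sqrt b},\tfrac1b\}L$ and therefore
$$ A\le a(p+1)\Big(\max\big\{\tfrac{2}{c\sqrt b},\tfrac1b\big\}\Big)^p L^p . $$
Consequently the hypothesis $A<c\max\{c,2\sqrt b\}$ of the Theorem is met as soon as
$$ L<\Big(\frac{c\max\{c,2\sqrt b\}}{a(p+1)}\Big)^{1/p}\Big/\max\big\{\tfrac{2}{c\sqrt b},\tfrac1b\big\}, $$
and then the Theorem yields precisely the asserted estimate $|u(t)-v(t)|+|u'(t)-v'(t)|\le K\exp(-\delta t)$.

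It then remains to recognise the right-hand side above as one of the two quantities in the stated minimum. Since $\tfrac{2}{c\sqrt b}\ge\tfrac1b$ exactly when $c\le 2\sqrt b$, I would split into two cases. If $c\le 2\sqrt b$, then $\max\{c,2\sqrt b\}=2\sqrt b$ and $\max\{\tfrac{2}{c\sqrt b},\tfrac1b\}=\tfrac{2}{c\sqrt b}$, and a short rearrangement of the fractional exponents turns the bound into $\dfrac{c^{(p+1)/p}\,b^{(p+1)/(2p)}}{2^{(p-1)/p}(a(p+1))^{1/p}}$; if $c>2\sqrt b$, then $\max\{c,2\sqrt b\}=c$ and $\max\{\tfrac{2}{c\sqrt b},\tfrac1b\}=\tfrac1b$, and the bound becomes $\dfrac{b\,c^{2/p}}{(a(p+1))^{1/p}}$. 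In both cases the operative threshold is one of the two terms appearing in $\min\{\cdot,\cdot\}$, so the hypothesis $L<\min\{\cdot,\cdot\}$ of the Corollary implies the sufficient condition just obtained, which completes the argument.

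I do not anticipate any real difficulty: this is a direct substitution into the preceding Theorem combined with the known ultimate bound \eqref{CHbound}, and the only care required is the elementary bookkeeping of the exponents in the two cases (note that the two expressions in the minimum coincide when $c=2\sqrt b$, and that for $p\ge1$ the one selected by the case distinction is automatically the smaller, so that the minimum is in fact sharp given these ingredients).
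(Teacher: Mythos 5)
Your proposal is correct and follows essentially the same route as the paper: specialize the preceding theorem, compute $A=a(p+1)M^p$, bound $M$ via the ultimate bound \eqref{CHbound}, and split into the cases $c\le 2\sqrt b$ and $c\ge 2\sqrt b$, where the exponent bookkeeping yields exactly the two expressions in the minimum. The only difference is cosmetic (you carry the generic bound with $\max\{c,2\sqrt b\}$ and $\max\{\tfrac{2}{c\sqrt b},\tfrac1b\}$ before splitting, while the paper splits at the outset), plus your tangential remark on which term of the minimum is active, which is not needed for the statement.
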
 
\begin{proof}We use the notation
$$ F =  \L_{ \,t \rightarrow +\infty }\vert f(t)\vert $$ and we employ \eqref{CHbound}. If $c \ge 2\sqrt b$ we have $$ A =  (p+1)a \,\sup \{\vert s\vert^p,\quad 0\le s\le{1\over {b}}F  \} =
 (p+1)a{1\over {b^p}}F^p$$  so that (3.3) reduces to
$$(p+1)a{1\over {b^p}}F^p < c^2$$ or equivalently
$$F < {{bc^{2\over p}} \over {(a(p+1))^{1\over p}}}$$ If $ c \le2\sqrt b$ we have $$ A =  (p+1)a \,\sup \{\vert s\vert^p,\quad 0\le s\le{2\over {c\sqrt b}}F  \} =
 (p+1)a{2^p\over { c^p b^{p/2}}}F^p$$ so that (3.3) reduces to$$ (p+1)a{1\over {c^p b^{p/2}}} 2^p F^p < 2c\sqrt b$$ or equivalently$$F < {c^{{p+1}\over p}b^{{p+1}\over 2p}\over {2^{{p-1}\over p}(a(p+1))^{1\over p}}}$$ \end{proof}
 \begin{rmk}  The extent of sharpness of the new results on Duffing's equation remains basically unknown in both general and special cases, since the optimality of the result on the linear problem does not give any information about the non-linear problem: optimality of a method does not imply optimality of the result. As a consequence, it would be important to quantify as much as possible the non-uniqueness results of the litterature. \end{rmk}
 
\setcounter{equation}{0}\section{Some evolution equations}\label{sec:Evolution}

Let $H$ be a  real Hilbert space.  For every $x$ and $y$ in
$H$, $|x|$ denotes the norm of $x$, and $\langle x,y\rangle$ denotes
the scalar product of $x$ and $y$.  Let $B$ be a self-adjoint linear
operator on $H$ with dense domain $D(B)$.  We assume that $B$ is
coercive, and we consider the second order linear evolution equation
\begin{equation}\label{evol}
	u''(t)+[B + A(t)] u(t)+\gamma u'(t)=0,
	\quad\quad
	t\geq 0,
	\end{equation}
with 
initial data
\begin{equation}
	u(0)=u_{0} \in D(B^{1/2}):= V  ,
	\hspace{3em}
	u'(0)=u_{1} \in H .
	\label{IV}
\end{equation}  
Assuming that $\gamma \in L(H)$ and $A(t) \in L^\infty (\re^+, L(H))$, the initial value problem \eqref {evol}-\eqref {IV} has a mild solution 
$$ u \in C(\re^+, V)\cup C^1(\re^+, H)\cup W^{2,\infty}_{loc} (\re^+, V')$$ which satisfies the classical energy identity. We identify the unbounded operator $B$ with its extension in $L(V, V')$ and we are interested, in view of  applications to Duffing-like infinite dimensional problems, in a criterion for exponential stability of (0,0) in the energy norm. 
We consider first a coercivity constant $b$ for $B$ and a coercivity constant $c$ for $\gamma$
\begin{eqnarray}\label{hypo1}
\exists b>0, \quad  \forall v\in V,\quad \langle Bv,v\rangle\geq b\vert
v\vert^2
\end{eqnarray}
\begin{eqnarray}\label{hypo2}
\  \forall v\in V,\quad \langle \gamma v,v\rangle\geq c|v|^2
\end{eqnarray}

and we introduce the following numbers 
$$ ||A||: = ess \sup_{t\ge 0}  ||A(t) ||_{L(H) } $$  
$$ \Gamma : = ||\gamma||_{L(H)} $$ Note that in general $ c\le \Gamma $ and in the important special case $\gamma = cI$ we have $c= \Gamma $. 
 In the sequel we shall denote  the norm in $V$ by $$\  \forall v\in V,\quad  ||v|| := |B^{1/2} v| $$ and the duality pairing in $V'\times V$ will be denoted in the same way as the inner product in $H$. We now state the main result of this section

\begin{thm}\label {evolth} Let us assume that the operator $\gamma$  is such that $ \gamma \in L(V) $ and for some positive constants $\rho, \eta$ we have 

\begin{eqnarray}\label{hypo2}
\  \forall v\in V,\quad \langle Bv, \gamma v\rangle\geq \rho |v|^2 + \eta ||v||_V ^2
\end{eqnarray}

Assume \begin{equation}\label{A} ||A|| < \sqrt {c\rho + \frac{c^2}{4} \Gamma^2} - \frac{c}{2} \Gamma. \end{equation} Then there are  $\delta>0$ and $M>0$ such that any solution of \eqref {evol}-\eqref {IV}satisfies  \begin{equation}||u(t)||^2 + |u'(t)|^2 \le M [||u(s)||^2 + |u'(s)|^2] \exp(-\delta (t-s))
\end{equation} whenever  $0\le s\le t.$
\end{thm}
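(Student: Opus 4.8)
The plan is to transpose to the energy space $V\times H$ the Lyapunov-function argument used for Theorem \ref{mainth}, with hypothesis \eqref{hypo2} on $\langle Bv,\gamma v\rangle$ playing the role the scalar identity $\langle Bu,u\rangle=b\,u^2$ plays there, and with the assumption $\gamma\in L(V)$ ensuring that $\gamma u\in V$, so that every duality pairing written below makes sense. Self-adjointness of $\gamma$ will \emph{not} be needed. First I would introduce the functional
\[
\mathcal{F}(t):=\tfrac14|u'(t)|^2+\tfrac12\|u(t)\|^2+\tfrac14\bigl|u'(t)+\gamma u(t)\bigr|^2=\tfrac12|u'|^2+\tfrac12\langle Bu,u\rangle+\tfrac12\langle\gamma u,u'\rangle+\tfrac14|\gamma u|^2,
\]
which is the exact transcription of the quadratic form $F$ from the proof of Theorem \ref{mainth}. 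Since $\gamma\in L(H)$ and $B$ is coercive with constant $b$, one has $\tfrac14(|u'|^2+\|u\|^2)\le\mathcal{F}(t)\le C_0(|u'|^2+\|u\|^2)$ with $C_0$ depending only on $b$ and $\Gamma$; thus $\mathcal{F}$ is equivalent to the square of the energy norm.

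Next I would differentiate $\mathcal{F}$ along a solution. Since only a mild solution is available, by the standard regularization argument (approximate the data in $V\times H$, use continuous dependence and the energy identity) it suffices to do this for strong solutions. Using $u''=-(B+A(t))u-\gamma u'$ in $V'$, the self-adjointness of $B$, and expanding $\tfrac{d}{dt}\tfrac12\langle\gamma u,u'\rangle$ through the equation, the two occurrences of $\langle Bu,u'\rangle$ cancel and the two occurrences of $\tfrac12\langle\gamma u,\gamma u'\rangle$ cancel — exactly the job the $\frac{c^2}{4}u^2$-term does in the scalar proof — leaving
\[
\mathcal{F}'(t)=-\langle A(t)u,u'\rangle-\tfrac12\langle\gamma u',u'\rangle-\tfrac12\langle Bu,\gamma u\rangle-\tfrac12\langle A(t)u,\gamma u\rangle.
\]
Then, using $\|A(t)\|_{L(H)}\le\|A\|$ and $|\gamma u|\le\Gamma|u|$ together with Cauchy--Schwarz for the first and fourth terms (so that no commutator of $B$, $A(t)$, $\gamma$ ever enters), the coercivity $\langle\gamma v,v\rangle\ge c|v|^2$ for the second term, and hypothesis \eqref{hypo2} for the third term, I would obtain
\[
\mathcal{F}'(t)\le-\tfrac{\eta}{2}\|u\|^2-\Bigl[\tfrac{c}{2}|u'|^2-\|A\|\,|u|\,|u'|+\tfrac{\rho-\|A\|\Gamma}{2}|u|^2\Bigr].
\]

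The decisive point is that condition \eqref{A} is precisely what makes the bracketed quadratic form in $(|u|,|u'|)$ positive definite. With $x=|u|$, $y=|u'|$ it is $\tfrac{c}{2}y^2-\|A\|xy+\tfrac{\rho-\|A\|\Gamma}{2}x^2$; since the coefficient of $y^2$ is positive, positive definiteness holds iff the discriminant is negative, i.e. $\|A\|^2<4\cdot\tfrac{c}{2}\cdot\tfrac{\rho-\|A\|\Gamma}{2}=c(\rho-\|A\|\Gamma)$, equivalently $\|A\|^2+c\Gamma\|A\|-c\rho<0$; solving this quadratic inequality in $\|A\|\ge0$ gives exactly $\|A\|<\sqrt{c\rho+\tfrac{c^2}{4}\Gamma^2}-\tfrac{c}{2}\Gamma$, which is \eqref{A}, and it automatically forces $\|A\|\Gamma<\rho$ so that the $x^2$-coefficient is indeed positive. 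Hence under \eqref{A} there is $c_1>0$ with $\mathcal{F}'(t)\le-c_1(|u'|^2+\|u\|^2)\le-\delta\,\mathcal{F}(t)$, where $\delta:=c_1/C_0$; integrating gives $\mathcal{F}(t)\le\mathcal{F}(s)e^{-\delta(t-s)}$, and the two-sided equivalence of $\mathcal{F}$ with the energy yields the stated estimate with $M:=4C_0$. The only genuinely delicate step is the rigorous justification of the identity for $\mathcal{F}'$ in the mild-solution framework; everything else is the same bookkeeping as in the finite-dimensional Theorem \ref{mainth}, with Cauchy--Schwarz replacing the exact completion of squares.
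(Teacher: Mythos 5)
Your proposal is correct and follows essentially the same route as the paper: the same Lyapunov functional $\tfrac14|u'|^2+\tfrac12\|u\|^2+\tfrac14|u'+\gamma u|^2$, the same derivative identity, the same Cauchy--Schwarz/coercivity bounds, and the same discriminant condition $\|A\|^2<c(\rho-\|A\|\Gamma)$ rewritten as the stated threshold. Your extra remarks (density argument for mild solutions, explicit equivalence constants, and the sign $-\tfrac12\langle Bu,\gamma u\rangle-\tfrac12\langle A(t)u,\gamma u\rangle$, which corrects a small sign slip in the paper's displayed formula for $\Phi'$) are consistent with, and only make explicit, what the paper leaves implicit.
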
 
\begin{proof} We introduce the energy  of the solution \begin{equation} E: = E(t) = \frac{1}{2}(|u'(t)|^2  +|B^{1/2}u(t)|^2) \end{equation} and the quadratic form \begin{equation} \Phi =  \frac{1}{2}(|u'(t)|^2  +|B^{1/2}u(t)|^2) +  \frac{1}{2}\langle\gamma u, u' \rangle+  \frac{1}{4}|\gamma u|^2\end{equation}  The form $\Phi$ is definite on $V\times H$ as a function of $(u, u')$ since 
 \begin{equation}  \Phi =  \frac{1}{4}|u'(t)|^2  +\frac{1}{2}|B^{1/2}u(t)|^2 +  \frac{1}{4}|u'+ \gamma u|^2 \end{equation}
 A straightforward calculation gives $$ \Phi'(t) = -\langle A(t) u, u'\rangle -  \frac{1}{2}\langle \gamma u', u'\rangle - \frac{1}{2} \langle  A(t)u - B u, \gamma u \rangle $$
 Therefore $$ \Phi'(t) \le  - \frac{c}{2}|u'|^2 + ||A|| |u| |u'| +  \frac{1}{2}||A|| \Gamma |u|^2-  \frac{\rho}{2}|u|^2-  \frac{\eta}{2}||u||^2 $$ Leaving aside the last term, the quadratic form  
 $$ \frac{c}{2}|u'|^2 -||A|| |u| |u'| -  \frac{1}{2}||A|| \Gamma |u|^2 +  \frac{\rho}{2}|u|^2 $$  will be definite positive in the sense of $H\times H$ whenever 
 $$ ||A||^2 < c (\rho - ||A|| \Gamma) \Longleftrightarrow (||A|| + \frac{c}{2}\Gamma)^2 < c \rho + \frac{c^2}{4}\Gamma^2. $$
 The conclusion then follows easily by using the last term which provides definiteness  in the sense of $V\times H$  . \end{proof}
 
 \begin{rmk}  For applications it may be interesting to consider non-constant dissipations, such as $ \gamma u (x) = c(x) u(x) $  when $H$ is a Hilbert space of functions. When the dissipation is constant, the formulas simplify. \end{rmk}
 
 \begin{cor}\label {cI} Let us assume that  $\gamma = cI $ 
Assume \begin{equation}\label{A} ||A|| < c  \sqrt {b + \frac{c^2}{4} } - \frac{c^2}{2} . \end{equation} Then there are  $\delta>0$ and $M>0$ such that any solution of \eqref {evol}-\eqref {IV}satisfies  \begin{equation}||u(t)||^2 + |u'(t)|^2 \le M [||u(s)||^2 + |u'(s)|^2] \exp(-\delta (t-s))
\end{equation} whenever  $0\le s\le t.$
\end{cor}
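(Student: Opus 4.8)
The plan is to obtain Corollary \ref{cI} as a direct specialization of Theorem \ref{evolth} to the case $\gamma = cI$. The only mild subtlety is that the hypothesis $\langle Bv,\gamma v\rangle \ge \rho|v|^2 + \eta\|v\|_V^2$ of that theorem requires $\eta>0$, whereas the natural identity $\langle Bv,cv\rangle = c\,|B^{1/2}v|^2$ sits exactly at the borderline $\eta=0$; I would absorb this with an interpolation parameter.

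First I would record the trivial facts that for $\gamma = cI$ one has $\gamma\in L(V)$, $\Gamma = \|\gamma\|_{L(H)} = c$, and $\langle Bv,\gamma v\rangle = c\langle Bv,v\rangle = c\,|B^{1/2}v|^2$ for all $v\in V$. Then, using the coercivity \eqref{hypo1}, for any $\theta\in(0,1)$ I split
$$ \langle Bv,\gamma v\rangle = \theta\, c\langle Bv,v\rangle + (1-\theta)\,c\langle Bv,v\rangle \ge \theta bc\,|v|^2 + (1-\theta)c\,|B^{1/2}v|^2, $$
so that the hypothesis \eqref{hypo2} of Theorem \ref{evolth} holds with $\rho = \theta bc > 0$ and $\eta = (1-\theta)c > 0$.

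Next I would substitute $\Gamma = c$ and $\rho = \theta bc$ into the smallness condition of Theorem \ref{evolth}, which then reads
$$ \|A\| < \sqrt{\,c\rho + \tfrac{c^2}{4}\Gamma^2\,} - \tfrac{c}{2}\Gamma = \sqrt{\,\theta b c^2 + \tfrac{c^4}{4}\,} - \tfrac{c^2}{2} =: g(\theta). $$
Since $g$ is continuous and increasing on $[0,1]$ with $g(1) = c\sqrt{b + c^2/4} - c^2/2$, the assumption $\|A\| < c\sqrt{b + c^2/4} - c^2/2$ of the corollary guarantees, by continuity of $g$ at $\theta=1$, the existence of some $\theta\in(0,1)$ with $\|A\| < g(\theta)$. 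Fixing such a $\theta$, all hypotheses of Theorem \ref{evolth} are satisfied with $\rho = \theta bc$ and $\eta = (1-\theta)c$, and the asserted exponential decay estimate, with suitable $\delta$ and $M$, follows at once.

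I do not expect a genuine obstacle here; the argument is essentially bookkeeping once the interpolation trick is in place. The one point requiring care is precisely not to take $\eta = 0$, which would destroy the $V\times H$-definiteness of the Lyapunov functional used in Theorem \ref{evolth} — the parameter $\theta$, together with the strictness of the hypothesis, is exactly what is needed to avoid this. An equally short alternative I would keep in mind is to re-run the Lyapunov computation of Theorem \ref{evolth} directly with $\gamma = cI$, where the form $\Phi$, its derivative $\Phi'$, and the resulting quadratic form all simplify considerably; this reproduces the same threshold $c\sqrt{b + c^2/4} - c^2/2$ without any limiting step, and I would use whichever version reads more cleanly.
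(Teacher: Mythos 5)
Your proposal is correct and follows exactly the route the paper intends: the corollary is stated as an immediate specialization of Theorem \ref{evolth} to $\gamma = cI$ (with $\Gamma = c$ and $\langle Bv,\gamma v\rangle = c\|v\|_V^2$), and the paper gives no separate proof. Your interpolation in $\theta$, together with the strictness of the inequality $\|A\| < c\sqrt{b+c^2/4}-c^2/2$, correctly handles the only point the paper leaves implicit, namely that the theorem requires $\eta>0$ while the borderline choice $\rho = bc$, $\eta = 0$ is not directly admissible.
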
 

 \begin{rmk}  In case $H= \re$ and $Bu = (b+ \frac{C}{2}) u $ with $ ||A|| =  \frac{C}{2}$, the condition reduces to $C < 2c\sqrt b$ and we recover the result of Theorem \ref{mainth}. \end{rmk}

  \begin{rmk}  In the applications to infinite dimensional stability results, as in the case of Duffing's equation we need a priori information on the ultimate bound of solutions to be able to use the previous theorems. Some useful results in this direction are proved in  \cite{F-H1}, cf. also \cite{AH} for sharp results in the linear case allowing more precise ideas  on optimality
  of such estimates.\end{rmk}
  
   \begin{rmk}  It is clear from the proof that some stability result will also be available when the condition $A(t) \in L^\infty (\re^+, L(H))$ is replaced by $A(t) \in L^\infty (\re^+, L(V, H)).$  But of course the conditions will be more complicated to write down and a generalization in this direction must be motivated by concrete applications. \end{rmk}

\setcounter{equation}{0}\section{A simple example }\label{sec:Example} To illustrate the last theorem, we give a simple explicit example in one space dimension. Let us consider the semilinear wave equation 
 \begin{equation}\label{wave}u_{tt} - u_{xx} + ku^3 + c u_t = f(t, x) \end{equation} in the interval $\Omega = (0, 1)$  with homogeneous Dirichlet boundary conditions 
 $$ u(t, 0) = u(t, 1) = 0$$ Assuming $f\in L^\infty ( \re^+; L^2(\Omega)) $, and denoting by $|.|_H$ the norm in $H = L^2(\Omega)$,  it follows from theorem 2.1 of  \cite{F-H1} that all solutions are bounded in the energy space with 
 \begin{equation} \displaystyle\limsup_{ \,t \rightarrow +\infty }\vert u_x(t)\vert^2_H \le \left(\frac{1}{\pi^2} + \frac{4}{c^2}\right)  \displaystyle\limsup_{ \,t \rightarrow +\infty }\vert f(t)\vert^2_H  \end{equation} If $u, v$ are two different solutions of the equation, the difference $w = u-v$ satisfies the equation 
 \begin{equation}\label{A}w_{tt} - w_{xx} + k(u^2+uv+v^2)w + c w_t = 0 \end{equation} and it is rather classical to observe that in this case 
 $$ ||(u^2+uv+v^2)||_\infty  \le 3 \max\{||u||^2_\infty , ||v||^2_\infty\} \le \frac{3}{\pi} \max\{||u_x||^2_H , ||v_x||^2_H\}$$ As a consequence, by Corollary \ref{cI}, two arbitrary solutions of \eqref{wave} will be exponentially asymptotic in the energy space as soon as we have 
 $$ k \frac{3}{\pi} \left(\frac{1}{\pi^2} + \frac{4}{c^2}\right)  \displaystyle\limsup_{ \,t \rightarrow +\infty }\vert f(t)\vert^2_H <  c  \sqrt {\pi^2 + \frac{c^2}{4} } - \frac{c^2}{2}$$ 
 For instance, the special case $c= 2\pi$ gives the sufficient condition $$ k  \displaystyle\limsup_{ \,t \rightarrow +\infty }\vert f(t)\vert^2_H <  \frac{\pi^5}{3} (\sqrt 2-1)$$ Even for $k = 1$ this allows rather large source terms.

\label{NumeroPagine}

\end{document}